\newcommand{\be}{\begin{equation}}
\newcommand{\ee}{\end{equation}}
\newcommand{\beq}{\begin{eqnarray}}
\newcommand{\eeq}{\end{eqnarray}}
\def\R{{\mathfrak R}}
\newtheorem{prop}{Proposition}[section]
\newtheorem{theo}[prop]{Theorem}
\newtheorem{lemm}[prop]{Lemma}
\newtheorem{coro}[prop]{Corollary}
\newtheorem{defi}[prop]{Definition}
\def\begeq{\begin{equation}}
\def\endeq{\end{equation}}
\def\p{\partial}
\def\R{\Bbb R}
\begin{document}

\title {The  isoperimetric inequality  on asymptotically flat manifolds with nonnegative scalar curvature}

\begin{abstract}
In this note, we consider the isoperimetric inequality on an asymptotically flat manifold with nonnegative scalar curvature, and improve it by using  Hawking mass. We also obtain a rigidity result when equality holds for the classical isoperimetric inequality on an asymptotically flat manifold with nonnegative scalar curvature.
\end{abstract}


\keywords{isoperimetric inequality; inverse mean curvature flow; Hawking mass; asymptotically flat manifold }
\renewcommand{\subjclassname}{\textup{2000} Mathematics Subject Classification}
 \subjclass[2000]{Primary 83C57  ; Secondary 53C44}

\author{Yuguang Shi$^\dag$}

\address{Yuguang Shi, Key Laboratory of Pure and Applied mathematics, School of Mathematical Sciences, Peking University,
Beijing, 100871, P.R. China.} \email{ygshi@math.pku.edu.cn}

\thanks{$^\dag$ Research partially supported by   NSF grant of China 10725101 and 10990013.}

\date{2015}
\maketitle

\markboth {Yuguang Shi}{}

\section {introduction}

   The isoperimetric inequality  and isoperimetric surfaces have a very long history and many important applications in mathematics, see e.g. \cite{B}, \cite{CY}.  Huisken has observed that ADM mass of an  asymptotically flat manifold (see Definition \ref{defaf} below) appears in the expansion of isoperimetric ratio when the volume is large enough, see \cite{Hu} and \cite{EM2} (for the case of  coordinates sphere, see \cite{FST}). Inspired by these facts, it is natural to ask if there is any relationship between the isoperimetric inequality and quasi-local mass for any fixed  enclosed volume. In this short note,  we are able to use the Hawking mass to improve the isoperimetric inequality in some cases. In order to present our result, we need some notions.

\begin{defi}\label{defaf}
A complete and connected  three-manifold $(M^3,g)$ is said to be {\rm asymptotically flat}
(AF) (with one end) if there are a  positive constant $C>0$ and a compact subset $K$
such that $M\setminus K$ is diffeomorphic to $\R^3\setminus B_R(0)$
for some $R>0$ and in the standard coordinates in $\R^3$, the metric
$g$ satisfies:
\begin{equation} \label{daf1}
g_{ij}=\delta_{ij}+\sigma_{ij}
\end{equation}
with
\begin{equation} \label{daf2}
|\sigma_{ij}|+r|\p \sigma_{ij}|+r^2|\p\p\sigma_{ij}|\leq C r^{-1},
\end{equation}
 where $r$ and $\p$ denote the Euclidean distance and standard derivative operator on $\R^3$
respectively. The region $M\setminus K$ is called the end of $M$.
\end{defi}

An original idea of \cite{BC} is to use  the  weak solution of inverse mean curvature (\ref{imcf}) to estimate the volumes  of isoperimetric regions in an asymptotically hyperbolic manifold. Inspired by this, we use the same idea to investigate the same problem in the case of  AF manifolds.  More specifically,  for any $x\in M$, it is proved here that there is a weak solution $(G_t)_{t>-\infty}$ of (\ref{imcf}) with initial condition $\{x\}$ in \cite{HI}. One important property for this weak solution  is that for each $t\in \R$, $(G_t)$ has the least boundary area among all domains containing it, i.e. $(G_t)$ is a minimizing hull in $(M^3, g)$. Another interesting property is that  the Hawking  mass of $K_t =\p G_t$  which is defined as
$$
 m_H (t)=\frac{(Area (K_t))^\frac12}{(16\pi)^\frac32}(16\pi - \int_{K_t}H^2),
$$
  is nondecreasing in $t$; here, $H$ is the mean curvature of $K_t(x) =\p G_t$ with respect to outward unit normal vector. By using this quantity, we are able to estimate the  area of $K_t$  in terms of the volume of $G_t$, see (\ref{isoperineq1}) below; hence, we obtain Theorem \ref{comparisontheorem1}.
To do that, we need to parametrize $t$ by $v$, which is the volume of $G_t$,  and it turns out that this function $t(v)$ is Lipschitz; for details, see  Lemma \ref{functiontofv} below.  Let $m(v)= m_H (t(v))$, $B(v)= Area (K_{t(v)})$, and

\begin{equation}
\begin{split}
A(v)= \inf\{&\mathcal{H}^2(\partial^* \Omega):  \Omega\subset M \text{ is a Borel set with finite perimeter, and }\\
&\mathcal{L}^3 (\Omega)=v\}.
\end{split}\nonumber
\end{equation}
here, $\mathcal{H}^2$ is $2$-dimensional  Hausdorff measure for the reduced boundary of $\Omega$, and $\mathcal{L}^3 (\Omega)$  is the Lebesgue measure of $\Omega$ with respect to metric $g$.

Then our main result can be stated as follows

\begin{theo}\label{comparisontheorem1}
Suppose $(M^3, g)$ is an asymptotically flat (AF) manifold with nonnegative scalar curvature. Fix a point $o\in M$, for every  $v>0$, there is a $\rho>0$ so that for all $x\in M\setminus B_{\rho}(0)$ we have that

\begin{equation}\label{inequality1}
A(v)\leq (36\pi)^\frac13 \left(\int^v_0(1-(16\pi)^{\frac12}B^{-\frac12} (t) m(t))^\frac12  dt\right)^\frac23 .
\end{equation}
Where $m(v)$ is defined as above.

\end{theo}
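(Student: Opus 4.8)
The strategy is to exploit the weak inverse mean curvature flow $(G_t)$ starting from a point $x$ far out in the end, together with the monotonicity of the Hawking mass, to produce a comparison region whose boundary area is controlled by the volume. Since $A(v)$ is the infimum of the perimeter over all Borel sets of volume exactly $v$, it suffices to exhibit, for each $v>0$, some region $\Omega$ with $\mathcal L^3(\Omega)=v$ and $\mathcal H^2(\partial^*\Omega)$ bounded by the right-hand side of \eqref{inequality1}; the natural candidate is $G_{t(v)}$ itself (or a slight perturbation to make the volume exactly $v$). The key analytic input is the first variation structure of the flow: along the weak flow one has $\frac{d}{dt}\mathcal L^3(G_t)=\int_{K_t}\frac{1}{H}$ and $\frac{d}{dt}\operatorname{Area}(K_t)=\operatorname{Area}(K_t)$ (the latter in the jump-free regime, with the usual care at jump times where the area can only help), so after reparametrizing by $v$ via Lemma \ref{functiontofv} one gets a usable differential inequality relating $B(v)$, $m(v)$ and the derivative of $v\mapsto\operatorname{Area}(K_{t(v)})$.

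**The core estimate.**

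First I would record the Geroch-type computation: because the scalar curvature is nonnegative and $K_t$ is a minimizing hull (hence, away from jumps, the flow is smooth and one may integrate by parts), the Hawking mass monotonicity gives $m(v)\ge m(0)=0$ in the limit as the initial point recedes — this is exactly where the hypothesis $x\in M\setminus B_\rho(o)$ with $\rho$ large enters, since the Hawking mass of small coordinate spheres around a far-out point tends to $0$. More importantly, rearranging the definition of $m_H$ gives
\[
\int_{K_t}H^2=16\pi-16\pi\,(16\pi)^{-3/2}(\operatorname{Area}(K_t))^{-1/2}\,m_H(t)\cdot(16\pi)^{1/2},
\]
which I would clean up to $\int_{K_t}H^2 = 16\pi\big(1-(16\pi)^{1/2}B^{-1/2}(v)\,m(v)\big)$. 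By Cauchy-Schwarz, $\big(\int_{K_t}\frac1H\big)\big(\int_{K_t}H^2\big)\ge \operatorname{Area}(K_t)^{?}$ — the right Hölder pairing here is $\operatorname{Area}(K_t)=\int 1\le \big(\int H^2\big)^{1/3}\big(\int \frac1H\big)^{2/3}$, i.e. $\operatorname{Area}(K_t)^3\le \big(\int_{K_t}H^2\big)\big(\int_{K_t}\tfrac1H\big)^2$. Combining with $v'(t)=\int_{K_t}\frac1H$ yields
\[
B(v)^3 \le 16\pi\big(1-(16\pi)^{1/2}B^{-1/2}(v)m(v)\big)\,\Big(\frac{dt}{dv}\cdot\frac{\text{(something)}}{}\Big)^2,
\]
so that $B(v)^{3/2}\le (16\pi)^{1/2}\big(1-(16\pi)^{1/2}B^{-1/2}(v)m(v)\big)^{1/2}\,v'(t)^{-1}$ after matching the $t$- and $v$-derivatives. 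The point of the reparametrization is that $\frac{d}{dt}\operatorname{Area}(K_t)=\operatorname{Area}(K_t)$ becomes, in the $v$ variable, $\frac{dB}{dv}=B(v)\frac{dt}{dv}$, and then one integrates $\frac{d}{dv}B(v)^{3/2}=\frac32 B^{1/2}\frac{dB}{dv}$ to get $B(v)^{3/2}\le (36\pi)^{1/2}\int_0^v\big(1-(16\pi)^{1/2}B^{-1/2}(t)m(t)\big)^{1/2}dt$. Finally, since $G_{t(v)}$ has volume $v$ and boundary area $B(v)$ (it is a minimizing hull, so its reduced-boundary area equals $B(v)$), the definition of $A(v)$ gives $A(v)\le B(v)$, and raising the displayed inequality to the $2/3$ power produces \eqref{inequality1} with the constant $(36\pi)^{1/3}$.

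**Main obstacles.**

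The technical heart, which I would treat carefully rather than wave at, is the behavior at the \emph{jump times} of the weak flow, where $K_t$ suddenly jumps outward to the boundary of its minimizing hull. At such times $\operatorname{Area}$ is continuous (by the minimizing-hull property the jumped surface has the same area as the limit of the smooth ones) but $\int H^2$ can only decrease across the jump on the new surface, so the inequality $\int_{K_t}H^2\le 16\pi(1-\dots)$ is preserved; one must check that $v(t)$ remains absolutely continuous across jumps and that the Hölder chain still closes — this is where invoking Lemma \ref{functiontofv} (Lipschitz dependence of $t$ on $v$) and the exhaustion/weak-convergence properties from \cite{HI} is essential. A second, softer obstacle is justifying that the monotone Hawking mass is genuinely $\ge 0$: this requires that as the base point $x$ escapes to infinity the initial (degenerate) Hawking mass contribution vanishes and the flow foliates a full exhaustion of volumes $(0,v]$ for the chosen $x$ once $\rho$ is large enough depending on $v$ — precisely the quantifier structure "for every $v>0$ there is $\rho>0$" in the statement. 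The Cauchy-Schwarz/Hölder algebra and the final ODE integration are routine once these two points are secured.
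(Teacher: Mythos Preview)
Your differential-inequality core is right and matches the paper: reparametrize by volume via Lemma~\ref{functiontofv}, use the H\"older bound $\operatorname{Area}(K_t)^{3/2}\le \big(\int_{K_t}H^2\big)^{1/2}\int_{K_t}H^{-1}$, substitute $\int_{K_t}H^2=16\pi\big(1-(16\pi)^{1/2}B^{-1/2}m\big)$, and integrate $\frac{d}{dv}B^{3/2}$ to arrive at $B(v)\le(36\pi)^{1/3}\big(\int_0^v(1-(16\pi)^{1/2}B^{-1/2}m)^{1/2}\big)^{2/3}$. That part is fine.

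The gap is in your treatment of jump volumes, and it is not the difficulty you describe. You write that one must check ``$v(t)$ remains absolutely continuous across jumps''---but it does not: at a jump time $\tau$ the volume jumps from $v_1=\mathrm{Vol}(G_\tau)$ to $v_2=\mathrm{Vol}(G_\tau^+)>v_1$. This is harmless for the inequality $B(v)\le(36\pi)^{1/3}(\dots)^{2/3}$, since on $(v_1,v_2]$ the function $t(v)$ is constant, hence $B(v)=B(v_1)$ and $dB/dv=0$ there, so the ODE comparison survives. The real issue is the step you pass over in one clause: ``since $G_{t(v)}$ has volume $v$\dots\ the definition of $A(v)$ gives $A(v)\le B(v)$.'' For $v\in(v_1,v_2]$ there is \emph{no} $G_t$ with volume exactly $v$; you have competitors of volumes $v_1$ and $v_2$, both with boundary area $B(v_1)$, but $A(v)$ demands volume exactly $v$. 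A ``slight perturbation'' does not obviously work: shrinking $G_\tau^+$ or enlarging $G_\tau$ can increase boundary area.

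The paper closes this gap with an auxiliary result you do not mention: it introduces the exterior profile $A_{\mathrm{ext}}(v)$ and proves (Lemma~\ref{nondecreasav}) that $A_{\mathrm{ext}}$ is nondecreasing. This is nontrivial---the proof compactifies the end by gluing on a large round sphere, produces a minimizer for perimeter subject to a volume \emph{inequality}, and uses the Meeks--Yau area lower bound (Lemma~\ref{areabound}) together with the absence of closed minimal surfaces in $M_{\mathrm{ext}}$ to rule out the alternatives. With this in hand, for a jump volume $v$ one argues $A(v)\le A_{\mathrm{ext}}(v)\le A_{\mathrm{ext}}(v_2)\le \operatorname{Area}(K_\tau^+)=\operatorname{Area}(K_\tau)=B(v_1)=B(v)$. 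This is also why $\rho$ must be chosen large: one needs $G_{t(v)}$ to lie in the interior of $M_{\mathrm{ext}}$ so that the comparison with $A_{\mathrm{ext}}$ is available. Your stated reason for taking $x$ far out (to make $m(0)=0$) is not what drives the quantifier structure here; nonnegativity of $m(v)$ is not used in the proof of Theorem~\ref{comparisontheorem1} at all, only in the rigidity theorem.
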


When scalar curvature of $M$ is  non-negative, and $M$ satisfies some topological conditions,  we have that $m(v)\geq 0$. We see that in this case $A(v)\leq (36\pi)^\frac13 v^\frac23$.  Comparing this with the Euclidean case in which $m(v)=0$,  we observe the following heuristic phenomenon: {\it to enclose the same volume,  isoperimetric surfaces in a manifold with bigger mass have smaller area.} We believe such a phenomenon can also be observed in  the case of asymptotically hyperbolic  manifolds, and we will discuss this problem in a future paper. With these facts in mind, it is natural to ask what happens if there is  a $v_0>0$ with $A(v_0)=(36\pi)^\frac13 v_0 ^\frac23$?  Our next theorem gives an answer to this question.

\begin{theo}\label{rigidity}
Suppose $(M^3, g)$ is an asymptotically flat  manifold with nonnegative scalar curvature. Then there is a $v_0 >0$ with
$$
A(v_0)=(36\pi)^\frac13 v_0 ^\frac23
$$
  if and only if $(M^3, g)$ is isometric to $\R^3$.
\end{theo}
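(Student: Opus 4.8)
The backward implication is trivial: in $\R^3$ the classical isoperimetric inequality $A(v_0)=(36\pi)^{1/3}v_0^{2/3}$ holds for every $v_0>0$, with round spheres as the isoperimetric surfaces. So the content is the forward implication. The plan is to combine the monotonicity of the Hawking mass along weak inverse mean curvature flow with the sharp inequality $(\ref{inequality1})$ of Theorem~\ref{comparisontheorem1}, and then run a rigidity argument in the spirit of the equality case of the Riemannian Penrose inequality of Huisken--Ilmanen \cite{HI}.

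First I would fix a point $o\in M$ and, for a large $\rho$, take $x\in M\setminus B_\rho(0)$ as in Theorem~\ref{comparisontheorem1}, so that $(\ref{inequality1})$ applies at the volume $v_0$. If $A(v_0)=(36\pi)^{1/3}v_0^{2/3}$, then $(\ref{inequality1})$ forces
\[
v_0 \le \int_0^{v_0}\bigl(1-(16\pi)^{1/2}B^{-1/2}(t)\,m(t)\bigr)^{1/2}\,dt,
\]
and since the integrand is pointwise $\le 1$ (the Hawking mass $m(t)=m_H(t(t))\ge0$ once $x$ is far out, because $m_H(t)$ is nondecreasing and tends to the ADM mass $\ge0$ by the positive mass theorem), we conclude that the integrand equals $1$ for a.e.\ $t\in(0,v_0)$. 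Hence $m(t)=0$ for a.e.\ $t\in[0,v_0]$, and by monotonicity of $m_H$ in $t$ this means the Hawking mass of $K_t=\p G_t$ vanishes for all corresponding $t$, and in fact is nondecreasing and zero up to that point; combined with the limit being the ADM mass, we must have ADM mass zero. Then the rigidity case of the positive mass theorem (Schoen--Yau) gives that $(M^3,g)$ is isometric to $\R^3$.

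Alternatively, and more in the spirit of \cite{HI}, one can argue without invoking the full positive mass theorem's rigidity: vanishing of $m_H(t)$ for all $t$ in a nontrivial interval, together with the monotonicity formula's error term, forces each $K_t$ to be a totally umbilic round sphere in a flat region, the flow to be by concentric spheres, and the exhausted region $\bigcup_t G_t$ to be isometric to a punctured $\R^3$; propagating this out to infinity (using AF-ness and that the flow eventually sweeps out all of $M\setminus K$) yields $g$ flat, hence $M\cong\R^3$. The main obstacle I expect is the last step: one has the equality $m_H=0$ only along the particular weak flow starting at the far-away point $x$, whereas the conclusion concerns all of $M$; carefully controlling the weak flow (jumps, minimizing hulls, the Lipschitz parametrization $t(v)$ of Lemma~\ref{functiontofv}) to show it genuinely exhausts $M$ and that equality in the Geroch monotonicity propagates through the jump times is the delicate part. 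If that turns out to be technical, the cleanest route is the one via the positive mass theorem: reduce everything to "ADM mass $=0$" and quote Schoen--Yau rigidity.
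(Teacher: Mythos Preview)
Your overall strategy---force $m(t)=0$ from equality in \eqref{inequality1}, deduce flatness near infinity, then invoke Schoen--Yau rigidity---is the same as the paper's. But there is a genuine gap in the step where you pass from ``$m(t)=0$ for a.e.\ $t\in[0,v_0]$'' to ``ADM mass $=0$''. Monotonicity of $m_H$ together with $m_H\equiv 0$ on $(-\infty,t(v_0)]$ only gives $m_H(t)\ge 0$ for $t\ge t(v_0)$, hence ADM mass $\ge 0$; it does \emph{not} force the limit to be zero. Your alternative route (equality in Geroch monotonicity $\Rightarrow$ flatness of the swept region, then ``the flow eventually sweeps out all of $M\setminus K$'') fails for the same reason: you only have $m_H=0$ up to the finite level $t(v_0)$, so the region you can certify as flat is the bounded set $G_{t(v_0)}$, not the whole end. (There is also a small slip: the reason $m(t)\ge 0$ is that $m_H\to 0$ as $t\to-\infty$ for IMCF from a point and $m_H$ is nondecreasing---not that $m_H$ tends to the ADM mass, which would give an \emph{upper} bound.)

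The paper closes the gap by exploiting the freedom in the choice of base point. It argues the contrapositive at the level of points: if some $x\in M\setminus B_\rho(0)$ were a non-flat point, then by Lemma~8.1 of \cite{HI} the weak IMCF issuing from that particular $x$ has $m(v)>0$ for all $v>0$, and \eqref{inequality1} becomes strict at $v_0$, contradicting $A(v_0)=(36\pi)^{1/3}v_0^{2/3}$. Hence every point of $M\setminus B_\rho(0)$ is a flat point, so the end is flat, the ADM mass vanishes, and Schoen--Yau rigidity gives $(M,g)\cong\R^3$. The paper also first checks, using the monotonicity of $A_{ext}$ (Lemma~\ref{nondecreasav}), that $v_0$ cannot be a jump volume, which you flagged as a concern but did not address.
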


Inequality (\ref{isoperineq1}) below is crucial in the proof of Theorem \ref{comparisontheorem1} and Theorem \ref{rigidity}, and its equivalent version was first proved in \cite{BC} (see Proposition 3 in \cite{BC}), and the arguments here are quite similar to those in \cite{BC}.

Theorem \ref{comparisontheorem1} and Theorem \ref{rigidity} play important roles in the proof of existence of isoperimetric regions in a non-flat AF manifold. There are many results that focus on large isoperimetric regions in  an AF manifold, where the asymptotic regime plays an important role, see \cite{EM1} and references therein. However, there are very few results on  the existence of isoperimetric regions with medium size in an AF manifold. One difficulty is that the minimizing sequence of isoperimetric regions may drift off to infinity, while  Theorem \ref{comparisontheorem1} allows for control over these minimizing sequence in a certain sense and we may obtain the existence isoperimetric regions with any given volume. This was observed in the  very recent paper \cite{CCE}.

The  outline of the paper is as follows. In Section 2, we introduce some notions and basic facts of weak solutions of inverse mean curvature flow from \cite{HI}; in Section 3, we prove the main results.

{\bf Acknowledgements} The author is  grateful to Prof. Frank Morgan, Dr. Gang Li and Dr. Chao Bao for pointing out some typos and errors in the first version of the paper, and also would like to thank referees for many useful comments and suggestions which make the paper  clearer.
Especially, the author would like to thank one of referees for pointing out that the assumptions of Theorem \ref{comparisontheorem1} and Theorem \ref{rigidity} can be relaxed by considering IMCF in the exterior region of $M$.

\section {Preliminary}
In this section, we  introduce some notions and  present some facts from \cite{HI} that will be needed in the proof of Theorem \ref{comparisontheorem1} and Theorem \ref{rigidity}. As in \cite{HI}, a classical solution of the inverse mean curvature flow (IMCF) in $(M^3, g)$ is a smooth family of $F: N\times [0, T]\to M$  of embedded hypersurfaces $N_t= F(N,t)$ satisfying the following evolution equation
\begin{equation}\label{imcf}
\frac{\p F}{\p t}=H^{-1}\nu, \quad 0\leq t\leq T
\end{equation}
where $H$ is the mean curvature of $N_t$ at $F(x, t)$ with respect to the outward unit normal vector $\nu$ for any $x \in N$. In generally, the evolution equation (\ref{imcf}) has no classical solution. In order to overcome this difficult, a level set approach was established in \cite{HI}, i.e. these  evolving surfaces were given as the level-sets of a scalar function $u$ via $N_t=\p\{x\in M: u(x)<t\}$, where $u$ satisfies the following degenerate elliptic equation in weak sense.

\begin{equation}\label{scalarimcf}
div_M (\frac{\nabla u}{|\nabla u|})=|\nabla u|.
\end{equation}
Here the left-hand side describes the mean curvature of level-sets and the right-hand side yields the inverse speed.

By the definition of AF manifolds, for any $x\in M\setminus K$, we may consider standard coordinates $x=(x^1, x^2, x^3)$ on $\R^3$. It was observed in \cite{HI} that $v(x)=C \log |x|$ is a weak subsolution of (\ref{scalarimcf}) on $M\setminus K$ (please see the precise definition  of weak subsolution of (\ref{scalarimcf}) on P.365 in \cite{HI} ), where $|x|=\sqrt{(x^1)^2 +(x^2)^2 +(x^3)^2 }$.  With this weak subsolution one is  able to prove the existence of the weak solution of (\ref{scalarimcf}) on $M$ with any nonempty precompact smooth open set $E_0$ as initial condition (See Theorem 3.1 in \cite{HI}). Let $u^\epsilon$  be the solution of the following  elliptic regularization:
\begin{equation}\label{ellipticregularization}
\left\{
  \begin{array}{ll}
   E^\epsilon u^\epsilon= div(\frac{\nabla u^\epsilon}{\sqrt{|\nabla u^\epsilon|^2 +\epsilon^2}})-\sqrt{|\nabla u^\epsilon|^2 +\epsilon^2}=0, & \text{in $\Omega_L$} \\
   u^\epsilon =0 , & \text{on $\p E_0$} \\
    u^\epsilon =L-2, & \text{on $\p F_L$}
  \end{array}
\right.
\end{equation}

Here and in the sequel, $F_L = \{v<L\}$, for any large $L>0$, and $\Omega_L = F_L \setminus \bar E_0$, let $W^\epsilon (x,z)= u^\epsilon (x)-\epsilon z$ be a function on $\Omega_L \times \R$, then we have

$$
div(\frac{\nabla W^\epsilon}{|\nabla W^\epsilon|})=|\nabla W^\epsilon|,
$$

or equivalently, the level set $N^\epsilon_t= \{(x,z)\in \Omega_L \times \R: W^\epsilon (x,z)=t\}$ is a slice of the inverse mean curvature flow in the domain $\Omega_L \times \R$ for any $t>0$, and actually it is the classical solution to (\ref{imcf}). We know from Lemma 3.5 in \cite{HI} that (\ref{ellipticregularization}) admits a classical solution. Also, we have the following compactness lemma and its proof can be found on P.398 in \cite{HI}.

\begin{lemm}\label{compactness1}
 Let $(M^3, g)$ be an AF manifold, and $E_0$ be a precompact set of $M$ with smooth boundary. Then there are subsequences $\epsilon_i \rightarrow 0$, $L_i \rightarrow \infty$, $N^i_t=N^{\epsilon_i}_t$ such that
\begin{equation}
N^i_t \rightarrow \tilde{N_t}=N_t \times \R, \quad\text{locally in $C^1$}, \quad \text{for almost every $ t\geq 0$}
\end{equation}
where $N_t=\p E_t$ and $(E_t)_{t>0}$ is the unique weak solution of (\ref{imcf}) with $E_0$ as the initial condition.
\end{lemm}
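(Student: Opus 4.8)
The plan is to follow the blow-up argument of Huisken–Ilmanen \cite{HI} adapted to the product setting $\Omega_L\times\R$. First I would record the uniform estimates available for the classical solutions $W^\epsilon(x,z)=u^\epsilon(x)-\epsilon z$ of the inverse mean curvature flow on $\Omega_L\times\R$. From Lemma 3.5 and the a priori estimates in \S3 of \cite{HI} one has, on every compact subset away from $\p E_0$, a uniform $C^0$ bound $0\le u^\epsilon\le L-2$, a uniform gradient bound $|\nabla u^\epsilon|\le C$ coming from comparison with the subsolution $v(x)=C\log|x|$ and the barrier construction in the AF end, and — crucially — a uniform lower bound $|\nabla u^\epsilon|\ge c>0$ on compact sets, so that the level-set equation is uniformly elliptic there. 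These translate into uniform local $C^{1,\alpha}$ (indeed $C^{2,\alpha}$ on the good set) bounds for $W^\epsilon$ and hence for its level sets $N^\epsilon_t$, uniformly in $\epsilon$ and $L$.

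Next I would extract the subsequences. Using the uniform bounds together with Arzelà–Ascoli and a diagonal argument over an exhaustion of $M$, choose $\epsilon_i\to 0$, $L_i\to\infty$ so that $u^{\epsilon_i}\to u$ locally uniformly, where $u$ is the weak solution of (\ref{scalarimcf}) on $M$ with initial condition $E_0$ produced by Theorem 3.1 of \cite{HI}; the convergence of the functions upgrades to $C^1_{\mathrm{loc}}$ convergence of the graphs $W^{\epsilon_i}$ to $W(x,z)=u(x)$, whose level sets are precisely $\tilde N_t=N_t\times\R$ with $N_t=\p\{u<t\}=\p E_t$. The translation-invariance in $z$ is what forces the limit to split as a product. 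Since $u$ has bounded variation and its level sets are smooth for a.e.\ $t$ (this is part of the regularity statement in \cite{HI}), one gets the $C^1$ convergence $N^i_t\to\tilde N_t$ for almost every $t\ge 0$, the exceptional set of $t$ being where $\{u<t\}$ jumps or fails the regularity. Uniqueness of the weak solution with given initial data is Theorem 3.3 of \cite{HI}, so the limit does not depend on the choice of subsequence beyond the stated extraction.

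The main obstacle — and the place where one genuinely needs the machinery of \cite{HI} rather than soft compactness — is obtaining the uniform lower gradient bound $|\nabla u^\epsilon|\ge c>0$ on compact sets independent of $\epsilon$ and $L$, since without it the equation degenerates and no uniform $C^1$ control of the level sets survives in the limit. This is handled by the comparison principle together with the explicit logarithmic subsolution in the AF end, which pins the level sets $F_L=\{v<L\}$ at a definite location and prevents the flow from stalling; one also uses that $\p E_0$ is smooth and precompact to get the bound near the initial surface. I would simply cite the corresponding estimates (Lemma 3.4, Lemma 3.5 and the compactness discussion on p.~398 of \cite{HI}) rather than reprove them, since they carry over verbatim to the product manifold $\Omega_L\times\R$, whose geometry is a bounded perturbation of that of $\Omega_L$.
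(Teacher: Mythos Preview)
The paper does not give an independent proof of this lemma at all; it simply records the statement and refers the reader to p.~398 of \cite{HI}. So in comparing approaches there is nothing to compare: the intended ``proof'' is a citation, and your final paragraph, which defers to Lemma~3.4, Lemma~3.5 and the compactness discussion on p.~398 of \cite{HI}, is already the whole of what the paper does.

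That said, the sketch you offer before that citation contains a genuine error. You assert a uniform lower bound $|\nabla u^\epsilon|\ge c>0$ on compact sets, independent of $\epsilon$ and $L$, and then use it to make the scalar equation uniformly elliptic and extract $C^{1,\alpha}$ control. No such lower bound is available in the Huisken--Ilmanen theory, and in fact it cannot hold: the limiting weak solution $u$ may have plateaus (the ``jump'' phenomenon you yourself invoke later, and which the present paper uses in Lemma~\ref{imcfvol}), so $|\nabla u^\epsilon|$ must tend to zero there as $\epsilon\to 0$. The logarithmic subsolution $v=C\log|x|$ gives barriers and hence an \emph{upper} bound on $|\nabla u^\epsilon|$, but it does not give a lower bound. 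Consequently your route to $C^{1}$ convergence via uniform ellipticity of the level-set PDE does not go through.

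The argument on p.~398 of \cite{HI} proceeds differently: one works with the level \emph{surfaces} $N^\epsilon_t$ directly. These are classical IMCF surfaces in $\Omega_L\times\R$, so one has uniform local bounds on their mean curvature $H_\epsilon=|\nabla W^\epsilon|$ (from the interior gradient estimate) and on their area (from the exponential area growth), and hence uniform local $C^{1,\alpha}$ estimates for the surfaces coming from geometric regularity theory. Compactness of the surfaces, together with the $C^0_{\mathrm{loc}}$ convergence $u^{\epsilon_i}\to u$, then yields $N^i_t\to N_t\times\R$ in $C^1$ for a.e.~$t$. If you want to give more than a citation, this is the mechanism you should sketch; the lower gradient bound should be dropped.
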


\section{Proof of the main theorems}

In this section, we first establish some lemmas, and then prove our main results. Many arguments are from \cite{BC}.  Lemma \ref{functiontofv} below plays an important role in the proof, and meanwhile we note that  many quantities involved are not  smooth along the  weak solution of inverse mean curvature flow (\ref{imcf}). To handle this difficulty,  we first calculate the corresponding quantities along the solutions to elliptic regularizations with suitable boundary conditions. Passing to the limit using  Lemma \ref{compactness1}, we get what we want.

Let $B_\mu(x)$ be any geodesic ball with radius $\mu>0$ and center $x$  in $(M, g)$,  and let $E_0=B_\mu(x)$.  We consider the following boundary problem

\begin{equation}\label{ellipticregularization2}
\left\{
  \begin{array}{ll}
   E^\epsilon u^\epsilon= div(\frac{\nabla u^\epsilon}{\sqrt{|\nabla u^\epsilon|^2 +\epsilon^2}})-\sqrt{|\nabla u^\epsilon|^2 +\epsilon^2}=0, & \text{in $\Omega_L$} \\
   u^\epsilon =0 , & \text{on $\p E_0$} \\
    u^\epsilon =L-2, & \text{on $\p F_L$}.
  \end{array}
\right.
\end{equation}

  Using Lemma \ref{compactness1}, we know there are subsequences $\epsilon_i \rightarrow 0$, $L_i \rightarrow \infty$, $N^i_t=N^{\epsilon_i}_t$ such that
\begin{equation}
N^i_t \rightarrow \tilde{N_t}=N_t \times \R, \quad\text{locally in $C^1$}, \text{for almost every $ t\geq 0$}
\end{equation}
where $N_t=\p E_t$ and $(E_t)_{t>0}$ is the unique weak solution of (\ref{imcf})  with  the initial condition $E_0=B_\mu(x)$.  For simplicity, as in the proof of Lemma 8.1 in \cite{HI}, for each $\mu>0$, we may take a suitable transformation on $t$, so that the weak solution $(E_t)$ for the initial value problem (\ref{imcf}) is defined on $[-T(\mu), \infty)$. Here $T(\mu)\rightarrow \infty$ as $\mu$ approaches to zero, and $(E_t)_{-T(\mu)\leq t <\infty}$ converges  locally in  $C^1$ to $(G_t)_{-\infty <t <\infty}$ which is the weak solution of (\ref{imcf}) with the single point $\{x\}$  as the initial condition.

Let $W^\epsilon$ be defined by (\ref{ellipticregularization2}), and

$$
V_\epsilon(t)= Vol(\{(x,z)\in \Omega_L \times \R: W^\epsilon (x,z)<t, \quad |z|\leq \frac12)\}).
$$

Note that the level sets of  $W^\epsilon$ form a classical solution to  (\ref{imcf}). We see that $V_\epsilon(t)$ is a smooth function of $t$, and further more, we have the following result

\begin{lemm}\label{vderivativet} Let $\chi_{\{|z|\leq \frac12\}}(x,z)$ be the characteristic function of the domain $\mathbb{D}=\{(x,z)\in \Omega_L \times \R: |z|\leq \frac12\}$. Then
$$
\frac{d V_\epsilon}{dt}=\int_{N^\epsilon_t}H^{-1}_\epsilon \chi_{\{|z|\leq \frac12\}}(x,z)dS >0.
$$
Here and in the sequel, $H_\epsilon$ denotes the mean curvature of $N^\epsilon_t$ in $\mathbb{D}$ with respect to the  unit normal direction $\frac{\nabla W^\epsilon}{|\nabla W^\epsilon|}$ .
\end{lemm}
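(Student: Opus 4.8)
The plan is to differentiate the volume function $V_\epsilon(t)$ directly, using the coarea formula and the fact that the level sets $N^\epsilon_t$ sweep out $\Omega_L\times\R$ classically. First I would write
$$
V_\epsilon(t)=\int_{\{W^\epsilon<t\}}\chi_{\{|z|\le\frac12\}}(x,z)\, dV,
$$
where $dV$ is the volume element on $\Omega_L\times\R$ (with the product metric $g+dz^2$). Since $W^\epsilon=u^\epsilon(x)-\epsilon z$ is a smooth function with $\nabla W^\epsilon\ne 0$ wherever $|\nabla u^\epsilon|^2+\epsilon^2>0$ — which holds everywhere because of the $\epsilon^2$ term — the sets $\{W^\epsilon<t\}$ are genuinely foliated by the smooth hypersurfaces $N^\epsilon_s$ for $s<t$, and the coarea formula gives
$$
V_\epsilon(t)=\int_{-\infty}^{t}\left(\int_{N^\epsilon_s}\frac{\chi_{\{|z|\le\frac12\}}(x,z)}{|\nabla W^\epsilon|}\, dS\right) ds .
$$
Differentiating in $t$ then yields
$$
\frac{dV_\epsilon}{dt}=\int_{N^\epsilon_t}\frac{\chi_{\{|z|\le\frac12\}}(x,z)}{|\nabla W^\epsilon|}\, dS .
$$

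The remaining point is to identify $|\nabla W^\epsilon|^{-1}$ with $H_\epsilon^{-1}$ on $N^\epsilon_t$. This is exactly the content of the observation recalled in the Preliminary section: the function $W^\epsilon$ satisfies $\mathrm{div}\!\left(\frac{\nabla W^\epsilon}{|\nabla W^\epsilon|}\right)=|\nabla W^\epsilon|$ on $\Omega_L\times\R$, and the left-hand side is precisely the mean curvature $H_\epsilon$ of the level set $N^\epsilon_t$ computed with respect to the unit normal $\nu_\epsilon=\frac{\nabla W^\epsilon}{|\nabla W^\epsilon|}$. Hence $H_\epsilon=|\nabla W^\epsilon|$ along $N^\epsilon_t$, and substituting gives the claimed formula
$$
\frac{dV_\epsilon}{dt}=\int_{N^\epsilon_t}H_\epsilon^{-1}\,\chi_{\{|z|\le\frac12\}}(x,z)\, dS .
$$
Positivity is then immediate: $H_\epsilon=|\nabla W^\epsilon|=\sqrt{|\nabla u^\epsilon|^2+\epsilon^2}\ge\epsilon>0$, so the integrand is strictly positive on the slab $\{|z|\le\frac12\}$, which has nonempty intersection with $N^\epsilon_t$ for every relevant $t$ (the level sets are noncompact cylinders-in-the-making and meet $|z|\le\frac12$), whence $\frac{dV_\epsilon}{dt}>0$.

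The main technical obstacle is making the differentiation under the integral sign fully rigorous near the two boundary pieces $\partial E_0$ and $\partial F_L$, where $N^\epsilon_t$ can meet the boundary of $\Omega_L$, and near $|z|=\frac12$, where the characteristic function is discontinuous. For the boundary of $\Omega_L$ one uses that for $t$ in the relevant open range $(0,L-2)$ the level set $N^\epsilon_t$ is interior, so no boundary contribution arises; for the jump at $|z|=\frac12$ one notes that the set $\{W^\epsilon=t\}\cap\{|z|=\frac12\}$ has $\mathcal H^2$-measure zero for a.e.\ $t$ by Sard-type arguments, so it does not affect the derivative. Once these measure-zero exceptional sets are disposed of, the coarea computation above is valid and the lemma follows. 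Smoothness of $V_\epsilon$ in $t$ follows from the smoothness of $u^\epsilon$ together with the uniform bound $|\nabla W^\epsilon|\ge\epsilon$, which keeps the foliation regular.
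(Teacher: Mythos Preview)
Your proof is correct and follows essentially the same approach as the paper: apply the coarea formula to express $V_\epsilon(t)$ as $\int_{-\infty}^{t}\int_{N^\epsilon_s}\frac{\chi_{\{|z|\le 1/2\}}}{|\nabla W^\epsilon|}\,dS\,ds$, differentiate in $t$, and use the identity $H_\epsilon=|\nabla W^\epsilon|$ coming from the level-set IMCF equation. Your write-up is in fact more explicit than the paper's on the identification $H_\epsilon=|\nabla W^\epsilon|$ and on the positivity and boundary issues, but the underlying argument is the same.
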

\begin{proof}
Using  the Co-area formula, we see that
\begin{equation}
\begin{split}
V_\epsilon (t)&=\int_\mathbb{D}\chi_{\{|z|\leq \frac12\}}(x,z)\chi_{\{W^\epsilon <t\}}(x,z)dv\\
&=\int^\infty_{-\infty}\int_{\{W^\epsilon =\sigma\}}\frac{\chi_{\{|z|\leq \frac12\}}(x,z)\chi_{\{W^\epsilon <t\}}(x,z)}{|\nabla W^\epsilon|}dSd\sigma\\
&=\int^t_{-\infty}\int_{\{W^\epsilon =\sigma\}}\frac{\chi_{\{|z|\leq \frac12\}}(x,z)}{|\nabla W^\epsilon|}dSd\sigma
\end{split}
\end{equation}
which implies
$$
\frac{d V_\epsilon}{dt}=\int_{N^\epsilon_t}H^{-1}_\epsilon \chi_{\{|z|\leq \frac12\}}(x,z)dS >0.
$$
This finishes the proof of the lemma.

\end{proof}

A direct conclusion of Lemma \ref{vderivativet} is the following

\begin{coro}\label{tderivativev}
Let $W^\epsilon$ be a classical solution to  (\ref{imcf}) on $\mathbb{D}$ and $v=Vol(\{(x,z)\in \Omega_L \times \R: W^\epsilon (x,z)<t, \quad |z|\leq \frac12\})$. Then $t$ is a smooth function of $v$ and
\begin{equation}
\begin{split}
\frac{dt}{dv}&=(\int_{N^\epsilon_t}H^{-1}_\epsilon \chi_{\{|z|\leq \frac12\}}(x,z)dS)^{-1}\\
&=(\int_{N^\epsilon_t \cap \{|z|\leq \frac12\}}H^{-1}_\epsilon dS)^{-1}.
\end{split}\nonumber
\end{equation}

\end{coro}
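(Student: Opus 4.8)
The plan is to deduce the corollary directly from Lemma~\ref{vderivativet} together with the inverse function theorem. Since the level sets of $W^\epsilon$ form a classical (smooth) solution of (\ref{imcf}) on $\mathbb{D}$, the function $V_\epsilon(t)$ is a smooth function of $t$ (this is exactly the regularity already used in the statement of Lemma~\ref{vderivativet}). Moreover, Lemma~\ref{vderivativet} gives $\frac{dV_\epsilon}{dt}=\int_{N^\epsilon_t}H^{-1}_\epsilon\chi_{\{|z|\leq\frac12\}}(x,z)\,dS>0$ for every $t$, the strict positivity coming from $H_\epsilon>0$ along the classical flow and from $\chi_{\{|z|\leq\frac12\}}$ being positive on a set of positive surface measure in $N^\epsilon_t$. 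Hence $t\mapsto V_\epsilon(t)$ is a strictly increasing smooth function on its interval of definition, so it is a diffeomorphism onto its image; in particular the relation $v=V_\epsilon(t)$ can be solved for $t$ as a smooth function of $v$.

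It then remains to differentiate. Applying the inverse function theorem to $V_\epsilon(t(v))=v$ yields $\frac{dt}{dv}=\bigl(\frac{dV_\epsilon}{dt}\bigr)^{-1}=\bigl(\int_{N^\epsilon_t}H^{-1}_\epsilon\chi_{\{|z|\leq\frac12\}}(x,z)\,dS\bigr)^{-1}$, which is the first displayed equality. The second equality is merely a rewriting: since $\chi_{\{|z|\leq\frac12\}}$ is the indicator of $\{|z|\leq\frac12\}$, inserting it into the integrand over $N^\epsilon_t$ is the same as integrating the unweighted $H^{-1}_\epsilon$ over $N^\epsilon_t\cap\{|z|\leq\frac12\}$.

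There is essentially no obstacle here; the corollary is a formal consequence of Lemma~\ref{vderivativet}. The only point deserving a word of care is that the positivity $\frac{dV_\epsilon}{dt}>0$ — which is what legitimizes the inversion and the identity $\frac{dt}{dv}=(\frac{dV_\epsilon}{dt})^{-1}$ — is already part of the conclusion of Lemma~\ref{vderivativet}, so no new estimate is needed.
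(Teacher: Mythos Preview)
Your proposal is correct and matches the paper's approach: the paper simply states that this corollary is ``a direct conclusion of Lemma~\ref{vderivativet}'' without giving any further argument, and your use of the inverse function theorem together with the positivity of $\frac{dV_\epsilon}{dt}$ from Lemma~\ref{vderivativet} is exactly the intended one-line deduction.
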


Let $(G_t)_{t>-\infty}$ be the weak solution of (\ref{imcf}). We have the following

\begin{lemm}\label{imcfvol}
For any $v>0$ either  there is a time $t \in \mathbb{R}$ with $Vol (G_t)=v$  or   $v$ is a jump  volume for (\ref{imcf}), i.e. there exists a time $t_1>-\infty$ with

$$
Vol(G_{t_1})<v\leq Vol(G^+_{t_1}),
$$
where $G^+_{t_1}$ is the strictly minimizing hull for $G_{t_1}$.
\end{lemm}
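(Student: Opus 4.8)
The plan is to understand the structure of the weak solution $(G_t)_{t>-\infty}$ of inverse mean curvature flow and exploit the basic dichotomy established by Huisken--Ilmanen: as $t$ increases, either the flow moves continuously, sweeping out volume in a continuous fashion, or it ``jumps'' at a discrete set of times where $G_t$ is replaced by its strictly minimizing hull $G_t^+$. I would first recall that $G_t = \{x : u(x) < t\}$ where $u$ is the weak solution of (\ref{scalarimcf}), so that $G_t = \bigcup_{s<t} G_s$ is left-continuous in $t$ (with respect to, say, the symmetric difference), while the strictly minimizing hull satisfies $G_t^+ = \mathrm{int}\,\{u \le t\}$ up to a null set, so $\bigcap_{s>t} G_s = G_t^+$ modulo measure zero. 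Consequently the function $t \mapsto \mathrm{Vol}(G_t)$ is nondecreasing and left-continuous, and its right limit at $t$ equals $\mathrm{Vol}(G_t^+)$.

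The key steps are then: (1) Show that $v(t) := \mathrm{Vol}(G_t)$ is nondecreasing, left-continuous, tends to $0$ as $t \to -\infty$ (since $G_t$ shrinks to the point $\{x\}$ as $t \to -\infty$, by the convergence statement in the excerpt that $(E_t)$ converges to $(G_t)$ and the initial condition is $\{x\}$), and tends to $\infty$ as $t \to +\infty$ (because the flow eventually engulfs any compact set, the surfaces being minimizing hulls with area growing like $e^t$ in an AF end). (2) Given $v > 0$, set $t_1 = \sup\{t : v(t) < v\} = \inf\{t : v(t) \ge v\}$, which is finite and $> -\infty$ by step (1). (3) By left-continuity, $v(t_1) = \lim_{t \uparrow t_1} v(t) \le v$; there are two cases. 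If $v(t_1) = v$ we are in the first alternative with $t = t_1$. If $v(t_1) < v$, then for every $t > t_1$ we have $v(t) \ge v$, hence taking $t \downarrow t_1$ gives $\mathrm{Vol}(G_{t_1}^+) = \lim_{t \downarrow t_1} v(t) \ge v$, so $\mathrm{Vol}(G_{t_1}) < v \le \mathrm{Vol}(G_{t_1}^+)$, which is precisely the jump-volume alternative.

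The main obstacle, and the place where care is needed, is establishing the precise continuity properties of $v(t)$ and the identification $\lim_{t\downarrow t_1}\mathrm{Vol}(G_t) = \mathrm{Vol}(G_{t_1}^+)$ — this rests on the exponential growth of area (hence the finiteness of the relevant sup/inf) and on the Huisken--Ilmanen description of jump times, where $G_t$ instantaneously becomes $G_t^+$ and all of the intermediate volume between $\mathrm{Vol}(G_{t_1})$ and $\mathrm{Vol}(G_{t_1}^+)$ is realized at the single time $t_1^+$. I would invoke the monotonicity of $t \mapsto \mathrm{Vol}(G_t)$ together with the minimizing-hull property (used to control that $G_t$ does not lose volume and that the only discontinuities are the jumps) directly from \cite{HI}; in particular the relation $\bigcap_{s>t}G_s = G_t^+$ up to a set of measure zero is the technical crux. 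A secondary, more routine point is to confirm $\mathrm{Vol}(G_t) \to \infty$ using the AF structure, which follows because the weak solution exhausts $M$ as $t\to\infty$.
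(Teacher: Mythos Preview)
Your proposal is correct and follows essentially the same approach as the paper. The paper defines two thresholds $t_0=\inf\{t:\mathrm{Vol}(G_t)\ge v\}$ and $\tau_0=\sup\{t:\mathrm{Vol}(G_t)\le v\}$, invokes the Huisken--Ilmanen $C^1$ convergence $K_t\to K_{t_0}^+$ as $t\downarrow t_0$ and $K_t\to K_{\tau_0}$ as $t\uparrow\tau_0$ to conclude $\mathrm{Vol}(G_{t_0}^+)\ge v\ge \mathrm{Vol}(G_{\tau_0})$, and then argues $t_0=\tau_0$; your single-threshold argument via left-continuity of $v(t)$ and the identification $\lim_{t\downarrow t_1}v(t)=\mathrm{Vol}(G_{t_1}^+)$ is just a repackaging of the same facts from \cite{HI}.
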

\begin{proof}
Let
$$
t_0=\inf\{t \in \R: Vol(G_t)\geq v\},
$$

and
$$
\tau_0=\sup\{t \in \R: Vol(G_t)\leq v\}.
$$

 Note that $t_0 \geq \tau_0$. By \cite{HI}, we know that $K_t=\p G_t$   converges to $K^+ _{t_0}$ locally in $C^1$ when  $t$ decreases to  $t_0$   and $K_t$   converges to $K _{\tau_0}$ locally in $C^1$  when  $t$ increases to  $\tau_0$  so that $Vol(G^+ _{t_0})\geq v \geq Vol(G _{\tau_0})$. If $t_0 > \tau_0$,  this contradicts the definition of $t_0$ or
$\tau_0$. Thus $t_0 =\tau_0$. Thus either $v$ satisfies $Vol(G_{t_0})=v$ or   $Vol(G_{t_0})<v\leq Vol(G^+_{t_0})$.  This finishes the proof of Lemma \ref{imcfvol}.

\end{proof}

The next lemma is on the relation between $t$ and the volumes of the level sets of a weak solution of (\ref{imcf}).

\begin{lemm}\label{functiontofv}
For any $v>0$, let
\begin{equation}
t(v)=\inf\{\tau: Vol(G_\tau)\geq v\}\nonumber.
\end{equation}

Then $t$ is a Lipschitz function and
\begin{equation}
\frac{dt}{dv}\leq (\int_{K_t}H^2 )^\frac12 \cdot (Area(K_t))^{-\frac32},\nonumber
\end{equation}
where $K_t=\p G_t$.
\end{lemm}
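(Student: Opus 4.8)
The plan is to reduce the Lipschitz estimate for $t(v)$ to the smooth estimate available along the elliptic regularizations, and then pass to the limit via Lemma \ref{compactness1}. Concretely, first I would fix $v>0$ and work with the approximating flows $W^{\epsilon_i}$ whose level sets solve \eqref{imcf} classically on $\mathbb{D}$. By Corollary \ref{tderivativev}, along each such flow $t$ is a smooth function of the enclosed slab-volume $v_\epsilon$ with
\[
\frac{dt}{dv_\epsilon}=\left(\int_{N^\epsilon_t\cap\{|z|\le\frac12\}}H^{-1}_\epsilon\,dS\right)^{-1}.
\]
The key inequality is then obtained by applying the Cauchy--Schwarz (Hölder) inequality to the product $1=H_\epsilon^{1/2}\cdot H_\epsilon^{-1/2}$ integrated over $N^\epsilon_t\cap\{|z|\le\frac12\}$, which gives
\[
\big(\mathcal{H}^2(N^\epsilon_t\cap\{|z|\le\tfrac12\})\big)^2\le \int_{N^\epsilon_t\cap\{|z|\le\frac12\}}H_\epsilon^2\,dS\cdot \int_{N^\epsilon_t\cap\{|z|\le\frac12\}}H^{-1}_\epsilon\,dS\cdot\big(\text{something}\big),
\]
so that $\frac{dt}{dv_\epsilon}\le (\int H_\epsilon^2)^{1/2}(\mathrm{Area})^{-3/2}$ up to keeping careful track of the power of the integrand $H_\epsilon^{-1}$ versus $H_\epsilon^{-1/2}$; one writes $1 = H_\epsilon^{-1/2}H_\epsilon^{1/2}$ and uses Hölder with exponents chosen to reproduce exactly the $-1$ power, yielding the clean bound with $\int_{N^\epsilon_t}H_\epsilon^2$ and $\mathrm{Area}(N^\epsilon_t)^{-3/2}$.

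Next I would integrate this differential inequality. Since the right-hand side $(\int_{K_t}H^2)^{1/2}(\mathrm{Area}(K_t))^{-3/2}$ is, along the weak IMCF, locally bounded (the area $\mathrm{Area}(K_t)$ grows like $e^t$ by Geroch monotonicity considerations and stays bounded below on compact time intervals once the flow has left the initial point, while $\int_{K_t}H^2\le 16\pi$ on minimizing hulls by the Geroch/Hawking bound), the function $t\mapsto v$ has derivative $\frac{dv}{dt}\ge c>0$ on compact time intervals, hence $t(v)$ is locally Lipschitz in $v$. The Lipschitz bound on $\frac{dt}{dv}$ passes to the limit because the convergence $N^i_t\to N_t\times\R$ is locally $C^1$ for a.e.\ $t$, so mean curvatures and areas converge; the slab $\{|z|\le\frac12\}$ of width $1$ ensures the limit surface $N_t\times\R$ contributes exactly $\mathrm{Area}(K_t)$ for the area and $\int_{K_t}H^2$ for the bending integral (the $z$-direction is flat and contributes nothing to $H$). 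I would also invoke Lemma \ref{imcfvol} to handle jump times: at a jump volume the function $t(v)$ is constant on the corresponding volume interval $(\mathrm{Vol}(G_{t_1}),\mathrm{Vol}(G^+_{t_1})]$, hence trivially Lipschitz there with derivative zero, and the one-sided derivative bound at the endpoints is inherited from the smooth estimate.

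The main obstacle I anticipate is the passage to the limit of the derivative inequality across the set of bad times and jump volumes, and more precisely justifying that the a.e.\ $C^1$ convergence of Lemma \ref{compactness1} is enough to conclude a pointwise (or a.e.) bound on $\frac{dt}{dv}$ for the limiting weak flow rather than merely an inequality in some integrated sense. The clean way around this is to prove the estimate first in the integrated form: for $v_1<v_2$ avoiding jump volumes, $t(v_2)-t(v_1)\le\int_{v_1}^{v_2}(\int_{K_{t(v)}}H^2)^{1/2}(\mathrm{Area}(K_{t(v)}))^{-3/2}\,dv$, obtained by integrating the $\epsilon_i$ inequality and taking liminf/limsup using Fatou and the $C^1$ convergence, and then differentiate. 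One must also be careful that the slab volume $v_\epsilon$ converges to the correct intrinsic volume $\mathrm{Vol}(G_t)$ as $\epsilon_i\to0$; this follows from the same compactness together with the fact that the slab has unit width in $z$, so $V_\epsilon(t)\to\mathrm{Vol}(E_t)$ and hence, after the reparametrization described before Lemma \ref{vderivativet}, to $\mathrm{Vol}(G_t)$. With these limiting statements in hand, the inequality $\frac{dt}{dv}\le(\int_{K_t}H^2)^{1/2}(\mathrm{Area}(K_t))^{-3/2}$ holds for a.e.\ $v$, and Lipschitz continuity of $t(v)$ follows since the right-hand side is locally bounded.
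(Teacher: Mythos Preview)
Your approach is essentially the paper's: compute $dt/dv$ along the elliptic regularizations via Corollary~\ref{tderivativev}, bound it by H\"older, integrate, and pass to the limit through Lemma~\ref{compactness1}. Two technical points need tightening. First, the H\"older step as you wrote it (splitting $1=H_\epsilon^{1/2}H_\epsilon^{-1/2}$) does not give the right exponents; the correct move is $1=H_\epsilon^{2/3}H_\epsilon^{-2/3}$ with exponents $p=3$, $q=3/2$, which yields
\[
\mathrm{Area}(N^\epsilon_t\cap\{|z|\le\tfrac12\})\le\Big(\int H_\epsilon^2\Big)^{1/3}\Big(\int H_\epsilon^{-1}\Big)^{2/3},
\]
hence $(\int H_\epsilon^{-1})^{-1}\le(\int H_\epsilon^2)^{1/2}\,\mathrm{Area}^{-3/2}$, exactly as in the paper. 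Second, invoking Fatou is in the wrong direction for the limit passage: you need $\limsup_i\int_{v_2}^{v_1}(\int_{N^i_t}H_i^2)^{1/2}\mathrm{Area}(N^i_t)^{-3/2}\,dv\le\int_{v_2}^{v_1}(\int_{K_t}H^2)^{1/2}\mathrm{Area}(K_t)^{-3/2}\,dv$, which requires dominated (bounded) convergence. The paper secures this by citing the uniform bound $\int_{N^i_t\cap\{|z|\le 1/2\}}H_i^2\le C(T)$ from \cite[(5.6)]{HI} and the a.e.\ convergence of $\int H_i^2$ from \cite[(5.12)]{HI}; your appeal to $\int_{K_t}H^2\le 16\pi$ applies only to the limiting weak flow, not to the approximants, and so does not supply the needed domination.
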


\begin{proof}
For any fixed $v>0$, let $t^i(v) =t^i$ with $v=Vol(\{(x,z)\in \Omega_L \times \R: W^\epsilon (x,z)<t^i, \quad |z|\leq \frac12\})$. Then by Lemma \ref{compactness1}, we see that $t^i(v)$ converges to $t(v)$. (Here we assume  without loss of generality the initial condition $B_\mu(x)$ shrinks to $\{x\}$ as $i \to \infty$.) Next, according to Corollary \ref{tderivativev}
\begin{equation}
\begin{split}
\frac{dt^i}{dv}&=(\int_{N^i_t \cap \{|z|\leq \frac12\}}H^{-1}_i dS)^{-1}\\
&\leq (\int_{N^i_t \cap \{|z|\leq \frac12\}}H^2_i dS)^\frac12 (Area (N^i_t \cap \{|z|\leq \frac12\}))^{-\frac32}
\end{split}\nonumber
\end{equation}
Hence, for any $v_1 \geq v_2$, we have
$$
t^i (v_1)-t^i (v_2)\leq \int^{v_1}_{v_2}(\int_{N^i_t \cap \{|z|\leq \frac12\}}H^2_i dS)^\frac12 (Area (N^i_t \cap \{|z|\leq \frac12\}))^{-\frac32} dv
$$

According to (5.6) in \cite{HI}, we see that for any $T>-T(\mu)$  and  all $t\in [-T(\mu),T]$
 $$\int_{N^i_t \cap \{|z|\leq \frac12\}}H^2_i dS\leq C(T),$$
here $C(T)$ is a constant  that depends only on $T$. Using also (5.12) in \cite{HI} we see that for almost every $t>-T(\mu)$, we have
$$
\int_{N^i_t\cap \{|z|\leq \frac12\}}H^2_i dS\rightarrow \int_{\tilde N_t\cap \{|z|\leq \frac12\}}H^2 dS.
$$

Letting $i \rightarrow \infty$ and using the bounded convergence theorem, we see that
\begin{equation}
\begin{split}
t(v_1)-t (v_2)&\leq \int^{v_1}_{v_2}(\int_{\tilde N_t\cap \{|z|\leq \frac12\}}H^2)^\frac12 (Area (\tilde N_t\cap \{|z|\leq \frac12\}))^{-\frac32} dv\\
&=\int^{v_1}_{v_2}(\int_{K_t}H^2 )^\frac12 \cdot (Area(K_t))^{-\frac32} dv.
\end{split}
\end{equation}

This finishes the proof of the lemma.

\end{proof}

Let $M_{ext}$ be the exterior region of $(M^3, g)$ defined in  Lemma 4.1 in \cite{HI}. Let $\Omega \subset M$ be a Borel set with finite perimeter, and $\Omega_{ext}= \Omega \cap M_{ext}$, here and in the sequel $M_{ext}$
is the exterior region of $M$, for the  its definition  see Lemma 4.1, P.392,  \cite{HI}. Let

$$
A_{ext}(v)= \inf \{\mathcal{H}^2 (\partial^* \Omega_{ext}): \mathcal{L}^3 (\Omega_{ext})=v \}.
$$

Clearly, we have $A(v)\leq A_{ext}(v)$. In order to prove Theorem \ref{comparisontheorem1} and Theorem \ref{rigidity}, we need $A_{ext}(v)$ to be  nondecreasing:

\begin{lemm}\label{nondecreasav}
Let $(M^3, g)$ be an AF manifold with nonnegative scalar curvature. Let $M_{ext}$ be the exterior region of $M$, then $A_{ext}(v)$ is nondecreasing.
\end{lemm}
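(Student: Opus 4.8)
The plan is to show that $A_{ext}$ is nondecreasing by exploiting the key property of the exterior region $M_{ext}$: its boundary consists of outermost minimal surfaces (minimizing hulls), so that $M_{ext}$ is itself a minimizing hull in $M$. The strategy is a standard ``push-in'' argument. Suppose $v_1 < v_2$ and let $\Omega_{ext} \subset M_{ext}$ be a Borel set of finite perimeter realizing (or nearly realizing) $A_{ext}(v_2)$, so $\mathcal{L}^3(\Omega_{ext}) = v_2$. I would like to produce a competitor for $A_{ext}(v_1)$ whose perimeter is no larger than $\mathcal{H}^2(\partial^* \Omega_{ext})$.

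First I would reduce to the case where a minimizer exists: by the usual compactness and lower semicontinuity of perimeter in a manifold with Euclidean-type ends, the infimum defining $A_{ext}(v)$ is attained by some $\Omega^{(2)}$ with $\mathcal{L}^3(\Omega^{(2)}) = v_2$ and $\mathcal{H}^2(\partial^* \Omega^{(2)}) = A_{ext}(v_2)$ (one must be slightly careful that the minimizing sequence does not lose volume to infinity, but within $M_{ext}$ this follows from the isoperimetric-type bounds available, or one simply argues with near-minimizers and takes limits at the end). Then, following the classical argument (as in \cite{BC} and originally the ideas behind the proof that $A(v)$ is nondecreasing), I would consider the family of sublevel sets obtained by continuously shrinking $\Omega^{(2)}$ — for instance intersecting with sublevel sets of a proper function, or more cleanly running a volume-decreasing deformation — and pick an intermediate set $\Omega^{(1)} \subset \Omega^{(2)}$ with $\mathcal{L}^3(\Omega^{(1)}) = v_1$. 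The point is to arrange that $\mathcal{H}^2(\partial^* \Omega^{(1)}) \leq \mathcal{H}^2(\partial^* \Omega^{(2)})$.

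The mechanism that makes the perimeter go down rather than up is that $\Omega^{(2)}$, being a perimeter-minimizer \emph{for fixed volume}, when cut down in volume can be replaced by its minimizing hull inside $M_{ext}$; since $M_{ext}$ has minimal boundary, taking minimizing hulls stays inside $M_{ext}$ and does not increase area. Concretely: replace $\Omega^{(1)}$ by its strictly minimizing hull $\widehat\Omega^{(1)}$ in $M$; by Lemma~4.1 of \cite{HI} (the defining property of $M_{ext}$, together with the fact that minimizing hulls of sets in $M_{ext}$ remain in $M_{ext}$) we have $\widehat\Omega^{(1)} \subset M_{ext}$, $\mathcal{H}^2(\partial^* \widehat\Omega^{(1)}) \leq \mathcal{H}^2(\partial^* \Omega^{(1)})$, and $\mathcal{L}^3(\widehat\Omega^{(1)}) \geq v_1$. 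If the volume strictly increased we re-shrink and iterate, or pass to a limit, to land exactly at volume $v_1$ while keeping the perimeter bound. This yields $A_{ext}(v_1) \leq \mathcal{H}^2(\partial^*\widehat\Omega^{(1)}) \leq \mathcal{H}^2(\partial^* \Omega^{(2)}) = A_{ext}(v_2)$, which is the claim.

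The main obstacle I anticipate is the bookkeeping in the shrinking/hull step: one must guarantee that the continuous volume-adjustment together with the minimizing-hull operation can be combined so as to hit the target volume $v_1$ \emph{exactly} without ever increasing perimeter, and that everything stays inside $M_{ext}$ so that the nonnegative-scalar-curvature hypothesis (which enters only through the structure of $M_{ext}$ and its outermost minimal boundary in \cite{HI}) is actually used legitimately. A clean way to organize this is to first establish the weaker statement that $v \mapsto A_{ext}(v)$ has no strict decrease across any minimizing-hull ``jump,'' and then handle the continuous part by the elementary observation that intersecting a minimizer with a one-parameter sweep-out of sublevel sets decreases volume continuously while, on average, not increasing the perimeter — the latter being a direct consequence of the coarea inequality and the minimality of $\Omega^{(2)}$. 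I would present these two pieces as short sub-claims and then assemble them.
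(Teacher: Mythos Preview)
Your proposal takes a route that differs substantially from the paper's and, as written, contains a genuine gap. The paper argues by contradiction: assuming $A_{ext}(v_1) > A_{ext}(v_2)$ for some $v_1 < v_2$, it compactifies the end of $M_{ext}$ by gluing on a large round sphere, and in the resulting compact manifold $(\bar M,\bar g)$ it minimizes boundary area among sets of volume \emph{at least} $v_1$. The contradiction hypothesis forces the minimizer $\Omega_0$ to have volume strictly greater than $v_1$ (the paper spends most of the proof on this, moving $\Omega_0$ back into the original region via translations and case analysis), so $\Sigma_0=\partial\Omega_0$ is a genuine stable minimal surface. A Meeks--Yau area lower bound then confines $\Sigma_0$ to a bounded part of $M_{ext}$, contradicting the defining property of the exterior region from Lemma~4.1 of \cite{HI}: $M_{ext}$ contains no closed minimal surfaces in its interior.

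Your direct ``push-in'' argument does not close. When you cut $\Omega^{(2)}$ by a sublevel set to reduce its volume, the perimeter typically \emph{increases}, since the new boundary acquires a piece of the slicing level set; the assertion that ``on average'' the perimeter does not increase via coarea is unjustified --- the coarea formula controls the integral of the slice areas, not a comparison of perimeters of the truncated sets with the original. Taking the minimizing hull of the truncated set $\Omega^{(1)}$ then reduces the perimeter below that of $\Omega^{(1)}$, not below that of $\Omega^{(2)}$, and since hulling only increases volume, the proposed ``shrink then hull'' iteration furnishes no monotone quantity and no reason to terminate with the desired bound. More fundamentally, the feature of $M_{ext}$ that drives the lemma is the \emph{absence of interior closed minimal surfaces}, not merely that its boundary components are outermost minimizing hulls; this is exactly what excludes the possibility that enclosing more volume costs less area, and your argument never invokes it in a way that does work.
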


We will use an idea from \cite{B} to prove this lemma: we need to construct a compact manifold with compact boundary from  $M_{ext}$. More precisely, note that $(M^3, g)$ is AF, hence we may take  a large compact domain $\Omega \subset M_{ext}$ so that  $M_{ext}\setminus \Omega$ is diffeomorphic to $\R^3 \setminus \mathbb{B}_{R+4}$,  hence, for simplicity, we just assume $\Omega \setminus K $ is differmorphic to $\mathbb{B}_{R+4} \setminus \mathbb{B}_\frac R2$, here $K$ is the compact domain of $M$ which appears in Definition \ref{defaf}.   On the other hand, we observe  that the standard sphere with radius $\frac\lambda 2$ can be expressed as $\mathbb{S}^2 (\lambda)=(\R^3, g_S=\frac{(dx^1)^2 + (dx^2)^2 +(dx^3)^2}{(1+\lambda^{-2}|x|^2)^2})$.  Let

\begin{equation}
\bar g= \left\{
        \begin{array}{ll}
          g, \quad \text{inside $\mathbb{B}_{R+5}$} \\
         \eta g +(1-\eta)g_S, \text{on $\mathbb{B}_{R+6} \setminus\mathbb{B}_{R+5}$} \\
          g_S, \quad \text{outside $\mathbb{B}_{R+6}$}
        \end{array}
      \right.
\end{equation}

where $\eta$ is a smooth function with $\eta=1$ in $\mathbb{B}_{R+5}$ and that vanishes outside $\mathbb{B}_{R+6}$. Thus $(M_{ext} , \bar g)$ can be regarded as a compact manifold  with compact boundary. We denote this manifold by $(\bar M, \bar g)$.

We also  need the following result from \cite[Lemma 1]{MY}.

\begin{lemm}\label{areabound}[Meeks-Yau] Let $\iota$ be the infinmum of the injectivity
radius of points in $\{x\in \bar M|\ d(x, S_\frac R2)>\frac
d4\}$. Let $K>0$ be the upper bound of the  curvature of $\bar M$ outside $\mathbb{B}_\frac R2$. Let $S_\frac R2$ be the coordinate sphere with radius $\frac R2$,  suppose $N$ is a minimal surface and suppose $x\in N$
is a point satisfying  $d(x,S_\frac R2)\ge
\frac d2$, then
\begin{equation}\label{area}
   |N\cap B_x(r)|\ge 2\pi K^{-2}\int_0^r \tau^{-1}(\sin
   K\tau)^2d\tau
\end{equation}
where $r=\min\{\frac d4,\iota\}$.
\end{lemm}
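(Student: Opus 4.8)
\textbf{Proof proposal for Lemma \ref{areabound}.} The plan is to establish the monotonicity-type lower bound for the area of a minimal surface $N$ passing through a point $x$ by comparing the intrinsic area growth of $N$ with that of a model of constant curvature $K$. First I would recall the standard monotonicity computation: for a minimal surface $N$ in a Riemannian manifold, and for the function $f(r)=|N\cap B_x(r)|$ measuring the area of the portion of $N$ inside the geodesic ball $B_x(r)$ centered at $x\in N$, one has the coarea identity $f'(r)=\int_{N\cap\partial B_x(r)}|\nabla^N \rho|^{-1}\,ds$, where $\rho(\cdot)=d(x,\cdot)$. Since $|\nabla^N\rho|\le 1$, we get $f'(r)\ge \mathrm{Length}(N\cap\partial B_x(r))$, so it suffices to bound the length of the slice curves from below. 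The key is therefore an isoperimetric/comparison estimate on the ``geodesic circles'' of $N$.

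The main step is to use the fact that $N$ is minimal, hence its intrinsic Gauss curvature satisfies $K_N \le \sec_M \le K$ along $N$ (by the Gauss equation, $K_N = \sec_M(T_xN) - |\mathrm{II}|^2/2 \cdot(\text{const})$, and minimality makes the second-fundamental-form contribution nonpositive: $K_N=\sec_M(T_xN)+\det(\mathrm{II})\le \sec_M(T_xN)\le K$ since $\det(\mathrm{II})=-\frac12|\mathrm{II}|^2\le 0$ for a minimal surface). Then, for $r$ smaller than the injectivity radius $\iota$ of $N$ at $x$ — which is controlled below by $\min\{\tfrac d4,\iota\}$ given the injectivity-radius and curvature hypotheses on the ambient region $\{d(\cdot,S_{R/2})>d/4\}$ (note that every intrinsic geodesic ball of $N$ of this radius embeds, because $N$ is minimal and lies in that region by $d(x,S_{R/2})\ge d/2$ together with $r\le d/4$) — I would invoke the standard comparison for surfaces with Gauss curvature bounded above by $K$: the length $L(s)$ of the intrinsic geodesic circle of radius $s$ about $x$ in $N$ satisfies $L(s)\ge 2\pi\, \frac{\sin(Ks)}{K}\cdot\frac1K$... more precisely $L(s)\ge \frac{2\pi}{K}\sin(Ks)$ (the Günther/Bishop-type volume comparison from below for $K_N\le K$). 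Note the intrinsic distance in $N$ dominates the ambient distance, so $N\cap B_x(r)$ contains the intrinsic ball of radius $r$ in $N$; hence $f(r)\ge \int_0^r L(s)\,ds$. Substituting $L(s)\ge \frac{2\pi}{K}\sin(Ks) = 2\pi K^{-2}\cdot s^{-1}(\sin Ks)\cdot(Ks)$... and one must be careful: the claimed bound $2\pi K^{-2}\int_0^r\tau^{-1}(\sin K\tau)^2\,d\tau$ has a squared sine, which signals that the comparison being used is actually for the area of the ambient geodesic ball $B_x(r)\subset M$ intersected with $N$, using the ambient exponential-map distortion (Rauch comparison) rather than the purely intrinsic one; so I would instead pull back to the tangent plane via $\exp_x$, estimate the Jacobian of $\exp_x|_{T_xN}$ from below by $\big(\frac{\sin K\tau}{K\tau}\big)$ in each of the two principal directions and use the minimal-graph structure, yielding exactly the factor $(\sin K\tau)^2$ after integrating $\tau\,d\tau\cdot\tau^{-2}(\sin K\tau)^2$.

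Concretely I would proceed as follows. (1) Fix $x\in N$ with $d(x,S_{R/2})\ge d/2$ and set $r=\min\{d/4,\iota\}$; observe $B_x(r)$ lies in the good region where $\sec_M\le K$ and $\exp_x$ is a diffeomorphism onto $B_x(r)$. (2) Consider $\tilde N=\exp_x^{-1}(N\cap B_x(r))\subset T_xM$; since $N$ is minimal and passes through $x$, near $x$ it is a graph over $T_xN$, and the Rauch comparison theorem gives that $\exp_x$ does not decrease distances by more than the factor $\frac{\sin K\tau}{K\tau}$ at radius $\tau$ along each geodesic. (3) Pull back the area form: $d\mathrm{area}_N \ge \big(\tfrac{\sin K\tau}{K\tau}\big)^2 d\mathrm{area}_{\tilde N}$ on the region at ambient distance $\tau$, because the metric restricted to the two-dimensional tangent directions of $N$ is distorted by at least that factor in area. (4) Since $\tilde N$ projects onto a region of $T_xN$ containing the Euclidean disc of radius $r$ (the graph has bounded slope and stays within $B_x(r)$), we get $|N\cap B_x(r)| \ge \int_{D_r\subset T_xN} \big(\tfrac{\sin K|\xi|}{K|\xi|}\big)^2 d\xi = 2\pi\int_0^r \big(\tfrac{\sin K\tau}{K\tau}\big)^2 \tau\,d\tau = 2\pi K^{-2}\int_0^r \tau^{-1}(\sin K\tau)^2\,d\tau$, which is exactly \eqref{area}.

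The main obstacle I expect is step (2)–(3): justifying rigorously that $N\cap B_x(r)$ is a graph over (a full disc in) $T_xN$ with controlled slope, and that the area distortion under $\exp_x^{-1}$ is bounded below by $(\sin K\tau/K\tau)^2$ rather than some weaker power. This requires combining minimality (to control the second fundamental form / slope of the graph via an a priori estimate on the chosen small scale $r$) with the Rauch comparison estimate applied to the restriction of $(\exp_x)_*$ to the two-plane field tangent to $N$; the subtlety is that the tangent plane of $N$ varies, so one needs the comparison in the form of a lower bound for the Jacobian of $\exp_x$ on every $2$-plane, which follows from the eigenvalue comparison for the path of Jacobi-field matrices under $\sec_M\le K$. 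Since the paper only needs the inequality \eqref{area} as a black box (it is quoted from \cite{MY}), I would present the above as the structure of the argument and refer to \cite{MY} for the detailed verification of the graph and distortion estimates.
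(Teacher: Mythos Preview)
The paper does not prove this lemma at all: it is stated verbatim as a quotation from \cite[Lemma~1]{MY} and used as a black box in the proof of Lemma~\ref{nondecreasav}. Your proposal correctly identifies this at the end, and your decision to sketch the Meeks--Yau monotonicity/Rauch-comparison argument while deferring the delicate graph and distortion estimates to \cite{MY} is exactly in line with how the paper handles the result. There is nothing to compare against, and your outline is a reasonable reconstruction of the classical argument; the one point to be cautious about is your step~(4), where the claim that $\tilde N$ projects onto a full disc of radius $r$ in $T_xN$ is not automatic and is precisely where the minimality hypothesis and the small-scale choice of $r$ do real work in \cite{MY}.
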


\begin{proof}[Proof of Lemma \ref{nondecreasav}]
 Assume that $A_{ext}(v)$ is not  nondecreasing. Then there are $v_1 < v_2$ with $A_{ext}(v_1)> A_{ext}(v_2)$.  Using result from geometry measure theory (\cite{Si}), there is a compact domain $\Omega_0 \subset \bar M$ with compact boundary $\Sigma_0$ and $\Sigma_0 \setminus \partial M_{ext}$ is smooth, and

$$
Area_{\bar g}(\Sigma_0)=\inf \{Area_{\bar g} (\p \Omega): \Omega \subset \bar M, Vol_{\bar g}(\Omega)\geq v_1\}.
$$
Here and in the sequel $Area_{\bar g}$, and $Vol_{\bar g}$ denote area and volume with respect to metric $\bar g$ respectively.
{\it We claim that $Vol_{\bar g}(\Omega_0)>v_1$ provided $R$ and $\lambda$ are large enough. Therefore, $\Sigma_0 \setminus \partial M_{ext}$ is a stable minimal surface in $\bar M$}. In fact, suppose $Vol_{\bar g}(\Omega_0)=v_1$,  for any $\epsilon>0$, we assume there is a compact domain $\mathbb{D}_2 \subset M$ with $Vol_g (\mathbb{D}_2 )=v_2$ and $Area_g (\p\mathbb{D}_2  )< A_{ext}(v_2)+\epsilon$, and without loss of generality, we assume $\mathbb{D}_2$ is contained in $\Omega$, then we have

\begin{equation}\label{ineq1}
Area_{\bar g}(\Sigma_0)\leq Area_{\bar g} (\p\mathbb{D}_2  )=Area_g (\p\mathbb{D}_2  )< A_{ext}(v_2)+\epsilon < A_{ext}(v_1),
\end{equation}

which implies $\Omega_0$ cannot be contained in $\Omega$ completely.

If $\Omega_0$ is contained the domain outside $\mathbb{B}_{R+6}$, then by the solution of isoperimetric problem on the standard sphere, we see that when $R$  and $\lambda$ become large enough,  the diameter of  $\Omega_0$  in $\bar M $ is uniform bounded independently of $R$ and $\lambda$. However, for any fixed $R$, taking $\lambda$ large enough, we see that the metric $\bar g$ restricted on $\Omega_0$ is almost Euclidean. Then, by a translation in $\R^3$, we may find a domain $\Omega_1$ which is contained in $\mathbb{B}_R \setminus \mathbb{B}_\frac R2 \subset \Omega $ and  isometric to $\Omega_0$ in $\R^3$. Hence, the volume of $\Omega_1$ and area of the boundary of $\Omega_1$ are very close to those of $\Omega_0$ with respect to metric $\bar g$ provided $R$ and $\lambda$ is large enough. By a small perturbation on $\Omega_1$ if necessary, we may assume $Vol_{\bar g} (\Omega_1)=Vol_{\bar g} (\Omega_0)$, and $A_{ext}(v_1) \leq Area_{\bar g}(\p \Omega_1)\leq Area_{\bar g}(\Sigma_0)+\epsilon$, which is contradiction to (\ref{ineq1}), provided that $\epsilon >0$ is sufficiently small.

For the remaining case, by the co-area formula, we see that we may find a coordinate sphere $S_\rho$ with $Area _{\bar g}(S_\rho \cap \Omega_0)<\epsilon$, and $R+6\leq \rho\leq 2R$. By the solution of  the classical isoperimetric problem on the standard sphere, we may assume the diameter of the part of $\Omega_0$ which outside $\mathbb{B}_\rho$ has uniform bounded independently of $R$ and $\rho$.  Therefore, by the same reasoning as  above, we may translate the part of $\Omega_0$ which outside $\mathbb{B}_\rho$ into $\mathbb{B}_{R}\setminus \mathbb{B}_\frac{R}{2}$ completely and get a new domain denoted by $\Omega_2$ which may have several  connected components and contained in $\mathbb{B}_{R}$. Note that $g$ is asymptotically flat. We see that the volume of $\Omega_2$ and area of the boundary of $\Omega_2$  with respect to $g$ are very close to these of $\Omega_0$. By a perturbation of $\Omega_2$ if necessary, we  get a domain in $\Omega$ which is still denoted by $\Omega_2$ with $Vol_{\bar g}(\Omega_2)=Vol_g(\Omega_2)=v_1$.  We again get $A_{ext}(v_1) \leq Area_{\bar g}(\p \Omega_2)\leq Area_{\bar g}(\Sigma_0)+2\epsilon$, which is contradiction to (\ref{ineq1}), provided $\epsilon >0$ is small enough. Therefore, $Vol_{\bar g}(\Omega_0)> v_1$, and hence, as we claimed before, $\Sigma_0\setminus \partial M_{ext}$ is a stable minimal surface in $\bar M$.

Finally, we want to prove the minimal surface $\Sigma_0$ is contained in $\mathbb{B}_{R+1}$ when $R$ is large enough. In particular, it is in $\Omega$.   Actually, for any $x\in \Sigma_0 \setminus \mathbb{B}_{R+1}$, note that  $(M,g)$ is AF. Thus, we may assume that $\iota >\frac R2$ and $K\leq  C R^{-3}$ outside $\mathbb{B}_{R+1}$. By (\ref{area}), we have that
$$
Area_{\bar g} (\Sigma_0)\geq C R^2,
$$
where $C$ is a  constant independent of $R$. However, by (\ref{ineq1}), we see that  this is a contradiction when $R$ is sufficiently  large. Thus, $\Sigma_0\setminus \partial M_{ext}$ is contained in $\Omega$. Without loss of generality, we may assume that $M_{ext}$ is foliated by spheres of positive mean curvature. It follows there are no minimal surfaces contained in $M_{ext}$. This finishes proof of the lemma.

\end{proof}

Now, we can prove Theorem \ref{comparisontheorem1} and Theorem \ref{rigidity}.
\begin{proof}[Proof of Theorem \ref{comparisontheorem1} and Theorem \ref{rigidity}] For any $v>0$, we may choose a sufficiently large $\rho=\rho(v)$, for any $x\in M\setminus B_{\rho}(0)\subset M_{ext}$ and consider the IMCF (\ref{imcf}) with initial condition $\{x\}$. By choosing $\rho >0$  sufficiently large  if necessary, we may assume there is a $G_t$ which is a domain in the weak solution of (\ref{imcf}) with initial condition $\{x\}$, satisfying $Vol(G_t)>v$, and $G_t$ is contained in the interior part of $M_{ext}$.  By a direct computation and Lemma \ref{functiontofv} , we see that
$$
\frac{dB}{dv}\leq B^{-\frac12}(\int_{K_t} H^2)^\frac12.
$$

By the definition of the Hawking mass of $K_{t(v)}$, we see that

$$
\int_{K_t} H^2= 16\pi -(16\pi)^\frac32 B^{-\frac12} m(v),
$$
 In conjunction with previous inequality, we obtain (see also Proposition 3 in \cite{BC}).

\begin{equation}\label{isoperineq1}
B(v)\leq (36\pi)^\frac13 \left(\int^v_0(1-(16\pi)^{\frac12}B^{-\frac12} (t) m(t))^\frac12  dt\right)^\frac23  .
\end{equation}

If $v$ is not a jump volume, then there is a $G_t$ with $Vol(G_t)=v$ so that in this case we have

\begin{equation}
\begin{split}
A(v)&\leq A_{ext}(v)\leq Area(K_t)=B(v)\\
&\leq (36\pi)^\frac13 \left(\int^v_0(1-(16\pi)^{\frac12}B^{-\frac12} (t) m(t))^\frac12  dt\right)^\frac23  ;
\end{split}\nonumber
\end{equation}

If $v$ is  a jump volume, then there is a $G_\tau$ with $v_1 =Vol (G_\tau)<v \leq Vol (G^+ _\tau)=v_2 $. Hence $t(v)=\tau$ and thus $B(v)=B(v_1)$,

$$
A(v)\leq A_{ext}(v)\leq A_{ext}(v_2)\leq Area (K^+ _\tau)=Area(K_\tau)=B(v_1)=B(v).
$$
Here we have used Lemma \ref{nondecreasav} in the first inequality. This finishes proof of Theorem \ref{comparisontheorem1}.

Suppose there is a $v_0>0$ with $A(v_0)=(36\pi)^\frac13 v_0 ^\frac23$. We claim that in this case $v_0$ is not a jump volume. Suppose not, then we may find $v_1$, $v_2$ with $v_1 <v_0 \leq v_2$, and $Vol(G_{t_1})=v_1$ and $Vol(G ^+ _{t_1})=v_2$. Since $A_{ext}(v)$ is nondecreasing , we see that $A_{ext}(v_1) \leq A_{ext}(v_0)\leq A_{ext}(v_2)$. However,
$$
A_{ext}(v_1)\leq Area(K_{t_1})\leq (36\pi)^\frac13 v^\frac23 _1,
$$

$$
A(v_0)=(36\pi)^\frac13 v_0 ^\frac23,
$$

$$
A_{ext}(v_2)\leq Area(K ^+ _{t_1} ) =Area (K _{t_1}).
$$

Combine these inequalities, we see that $v_0\leq v_1$, which is a contradiction. Thus  $v_0$ is not a jump volume.

Suppose there is non-flat point $x\in M\setminus B_{\rho}(0)$. We consider the weak solution of (\ref{imcf}) with initial condition $\{x\}$. By Lemma 8.1 in \cite{HI}, $m(v)>0$, for $v>0$,  together with  (\ref{isoperineq1}), we that there is $t>-\infty$ with $Vol (G_t)=v_0 $. Thus,

 $$
A(v_0)\leq B(v_0)< (36\pi)^\frac13  v_0 ^\frac23,
$$
 which is a contradiction. Thus $M\setminus B_{\rho}(0)$ is flat.  It follows that the ADM mass of $(M,g)$ is zero so that $(M,g)$ is flay by the positive mass theorem proved in \cite{SY}, we see $(M,g)=\mathbb{R}^3$.  This finishes proof of Theorem \ref{rigidity}.

\end{proof}

\end{document}